\definecolor{Red}{cmyk}{0,1,1,0}
\definecolor{verde}{cmyk}{1,0,1,0}
\definecolor{azul}{cmyk}{1,1,0,0}
\numberwithin{equation}{section}
\newcommand{\eqd}{\stackrel{\tiny d}{=}}
\newcommand{\Exp}{\mathop{\mathrm{Exp}}}
\def\Ed{{\mathbb{E}}}
\newcommand{\N}{\mathbb{N}}
\newcommand{\R}{\mathbb{R}}
\renewcommand{\P}{\mathbb{P}}
\newcommand{\dtj}{d_t(j)}
\newcommand{\dttj}{d_{t+1}(j)}
\newcommand{\dmaxt}{d_{max}(G_t)}
\newcommand{\Itj}{\mathbb{1}_{\{T_{j,1}=t_0\}}}
\renewcommand{\a}{\alpha}
\renewcommand{\b}{\beta}
\newcommand{\g}{\gamma}
\renewcommand{\d}{\delta}
\newcommand{\e}{\varepsilon}
\renewcommand{\l}{\lambda}
\newcommand{\D}{\Delta}
\newcommand{\GG}{\mathcal{F}}
\newcommand{\G}{\Gamma}
\newcommand{\be}{\begin{equation}}
\newcommand{\ee}{\end{equation}}
\newtheorem{teorema}{Theorem}
\newtheorem{proposicao}[equation]{Proposition}
\newtheorem{definition}{Definition}
\newtheorem{lema}{Lemma}
\newtheorem{corolario}[equation]{Corollary}
\begin{document}
\title{Large Communities in a scale-free network}
\author{Caio Alves$^1$}
\address{$^1$ Department of Statistics, Institute of Mathematics,
 Statistics and Scientific Computation, University of Campinas --
UNICAMP, rua S\'ergio Buarque de Holanda 651,
13083--859, Campinas SP, Brazil\newline
e-mail: {\itshape \texttt{narrowstreets@gmail.com}}}

\author{R{\'e}my Sanchis$^2$}
\address{$^2$Departamento de Matem{\'a}tica, Universidade Federal de Minas Gerais, Av. Ant\^onio
Carlos 6627 C.P. 702 CEP 30123-970 Belo Horizonte-MG, Brazil
\newline
e-mail: {\itshape \texttt{rsanchis@mat.ufmg.br}}}

\author{Rodrigo Ribeiro$^3$}
\address{$^3$Departamento de Matem{\'a}tica, Universidade Federal de Minas Gerais, Av. Ant\^onio
Carlos 6627 C.P. 702 CEP 30123-970 Belo Horizonte-MG, Brazil
\newline
e-mail: {\itshape \texttt{rodrigo-matematica@ufmg.br}}}

\begin{abstract}
We prove the existence of a large complete subgraph w.h.p.\ in a preferential attachment random graph process with an \emph{edge-step}. That is, we consider a dynamic stochastic process for constructing a graph in which at each step we independently decide, with probability $p\in(0,1)$, whether the graph receives a new vertex or a new edge between existing vertices. The connections are then made according to a preferential attachment rule.  We prove that the random graph $G_{t}$ produced by this so-called GLP \textit{(Generalized linear preferential)} model at time~$t$ contains a complete subgraph whose vertex set cardinality is given by~$t^\a$, where $\a = (1-\e)\frac{1-p}{2-p}$, for any small~$\e>0$ asymptotically almost surely. 
\vskip.5cm
\noindent
\emph{Keywords}: complex networks; clique; preferential attachment, concentration bounds.
\newline 
MSC 2010 subject classifications. Primary 05C82; Secondary  60K40, 68R10
\end{abstract}

\maketitle

\section{Introduction}
In recent years, the popularization of computers and of the Internet made possible the analysis of large ammounts of data. The empiric investigation of real-world complex networks, such as the WWW, the network of collaboration and neural networks \cite{bamodel,Dorogovtsev,aNewman2003} has shown that these networks exhibit a distinct behaviour from the classical Erd\"{o}s-R\'{e}nyi model of random graphs (see e.g. \cite{Alon00theprobabilistic} for the definition of the model). In particular, the distribution of the degrees of the vertices from these real-life networks  obeys a power-law, the so-called \textit{scale-free} phenomenon.

After these findings, the scientific community made significant efforts to explain the origin of the power-law phenomenon and to construct models capable of capturing the properties presented by the empirical evidences. One possible explanation is the phenomenon of \textit{Preferential Attachment} (PA), suggested in \cite{bamodel}, which states that new individuals in the networks preferentially connect themselves to the more popular ones. Nowadays there exists a broad literature about PA models; see \cite{bollrior,chungbook,cooper,durretbook,remco} and  references therein.

Beyond the \textit{power-law} phenomenon, other important questions arose, some of them about the spread of diseases in scale-free graphs and the vulnerability of such graphs to deliberate attack; see~\cite{bollobasRobus} for an example. In the context of vulnerability, cliques - i.e., complete subgraphs - play a significant role. When the attack is completely random, large cliques have high probability of remaining connected. On the other hand, deliberate attacks directed towards them represent a threat to the network's connectedness. Still in the practical context, the presence of certain subgraphs in biological networks, called \textit{motifs}, is related to functional properties selected by evolution \cite{motifs}. 


Furthermore, the order of the largest complete subgraph  provides a lower bound to the number of triangles in~$G$, a fundamental  quantity to study the so-called \textit{global clustering coefficient} of $G$ (see e.g.  \cite{bollrior,eggemann,russos,russos_gc}). For more works related to cliques in scale-free random graphs, see also \cite{cliquesbianconi,communitystructure,JLN}.

In this paper, we investigate a random graph model in a class known as GLP \textit{(Generalized linear preference)} in the literature of Computer Science and Physics. Many results are known about a variety of random graph models belonging to the GLP class. In \cite{mori}, the author proves convergence results for the maximum degree of a random graph model in the GLP class. In \cite{eggemann} the authors stablish the decay speed of the expected value of the global clustering coefficient. In \cite{remcodiam}, the authors find lower and upper bounds for the diameter of a subclass of GLP, demonstrating that they also capture the \textit{small-world} phenomenon. 
In \cite{degcorr} the authors analyze degree correlation in many models, including the GLP class and compare the theoretical results with networks obtained from empirical data.

The model here investigated is a modification of the PA model in which links between existing vertices are allowed. The effect of this alteration has positive consequences. In \cite{chungbook} the authors prove that this model obeys a power law with an tunable exponent. Empirically, the model also has shown some advantages over other models. In \cite{evnmodels} a statistical analysis is made comparing real world prediction capabilities between this GLP model and other influential network models, such as the Erd\"{o}s-R\'{e}nyi, Albert-Barabasi and Tel Aviv Network Generator. The results suggest that the GLP process we study in this paper outperforms these popular models when the task is predicting or mimicking real-world complex networks.

Let us briefly describe the process. This model has two parameters: a real number $p\in[0,1]$ and an initial graph $G_0$. 
For the sake of simplicity we will consider $G_0$ to be the graph with one vertex and one loop. We consider the following two stochastic operations that can be performed on the graph $G$:
\begin{itemize}
  \item \textit{Vertex-step} - Add a new vertex $v$, and add an edge $\{u,v\}$ by choosing $u\in G$ with probability proportional to its degree. \\
  
  \item \textit{Edge-step} - Add a new edge $\{u_1,u_2\}$ by independently choosing vertices $u_1,u_2\in G$ with probability proportional to their degrees. We note that we allow loops to be added, and we also allow a new connection to be added between vertices that already shared an edge.

\end{itemize}
We consider a sequence $(Z_t)_{t\ge 1}$ of i.i.d random variables such that $Z_t\eqd \mathrm{Ber}(p)$. We define inductively a random graph process $(G_t)_{t \ge 0}$  as follows: start with~$G_0$, the graph with one vertex and one loop. Given $G_{t}$, form $G_{t+1}$ by performing a \textit{vertex-step} on $G_t$ when $Z_t=1$, and performing an \textit{edge-step} on $G_t$ when $Z_t=0$. The resulting process is the object of study of this paper.

The goal of this paper is to investigate the existence and size of large complete subgraphs in $G_t$, which we  refer to as \textit{communities}. We are interested in communities whose vertex set cardinality, which we also call the community's \emph{order}, goes to infinity as the process evolves. With this in mind, we prove the following result:

\begin{teorema}[The existence of a large community]\label{teo:community} For any $\e>0$, the graph $G_{2t}$ has a complete subgraph of order~$t^{(1-\e)\frac{(1-p)}{2-p}}$, asymptotically almost surely.
\end{teorema}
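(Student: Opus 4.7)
The plan is to take the clique to be the set $V_k$ of the first $k = \ta$ vertices added (indexed by birth order), show they accumulate large degree by time $t$, and then argue that the $\Theta(t)$ edge-steps performed during $[t,2t]$ are plentiful enough to connect every one of the $\binom{k}{2}$ pairs. Write $T_j$ for the step at which vertex $j$ is added. Since the step-types are i.i.d.\ Bernoulli($p$), a Chernoff bound gives $T_k \le 2k/p$ w.h.p., and we condition on this event from here on.

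The core ingredient is a uniform degree lower bound. Setting $\b = (2-p)/2$, one easily computes the drift
\[
\Ed[\dssj - \dsj \mid \GG_s] = \frac{(2-p)\dsj}{2s}\bigl(1+o(1)\bigr),
\]
so that $\dsj$ multiplied by the deterministic factor $\prod_{r=T_j}^{s-1}(1+(2-p)/(2r))^{-1} \sim s^{-\b}$ is essentially a positive martingale. Using martingale concentration (Freedman's inequality) together with a union bound over $1\le j\le k$, one obtains a constant $c>0$ for which, w.h.p.,
\[
\dtj \ge c \, (t/T_j)^{\b}, \qquad 1\le j\le k.
\]

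Given this bound, fix a pair $i<j\le k$. At every edge-step occurring at a time $s\in[t,2t]$, the conditional probability of drawing the edge between $i$ and $j$ is $d_s(i)\dsj/(2s^2)$. Using degree monotonicity $d_s(\cdot)\ge d_t(\cdot)$ for $s\ge t$, together with $T_i,T_j\le 2k/p$ and $s\le 2t$,
\[
\frac{d_s(i)\dsj}{2s^2} \ge c_1 \frac{(t/k)^{2\b}}{t^2} = c_1\, t^{2\b(1-\a)-2} = c_1\, t^{\e(1-p)-1},
\]
since $2\b(1-\a)=(2-p)-(1-\e)(1-p)=1+\e(1-p)$. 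By another Chernoff bound, w.h.p.\ the number of edge-steps in $[t,2t]$ is at least $(1-p)t/2$. As edge-step indicators are conditionally independent given the filtration, the standard estimate $\prod(1-p_s)\le \exp(-\sum p_s)$ yields
\[
\P\bigl(\text{no edge between } i,j \text{ is added in } [t,2t]\bigr) \le \exp\bigl(-c_2\, t^{\e(1-p)}\bigr).
\]
A union bound over the $\binom{k}{2}\le t^{2\a}$ pairs then gives
\[
\P(V_k \text{ is not a clique in } G_{2t}) \le t^{2\a}\exp\bigl(-c_2\, t^{\e(1-p)}\bigr)\ra 0.
\]

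The principal obstacle is the uniform degree lower bound of the second paragraph. The martingale has unbounded random increments (a single edge-step can add two to a vertex's degree via a loop), its compensator behaves like $s^{-\b}$ which demands a delicate normalization, and the most recently born among the first $k$ vertices have had little time to grow, so one must use a concentration inequality sharp enough (e.g.\ Freedman's for martingales with bounded conditional variance) to survive a union bound over $\ta$ individually tight events. Once this lower bound is in hand, the rest of the argument is the essentially mechanical two-moment computation sketched above.
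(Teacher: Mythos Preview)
Your two-phase strategy (first grow degrees up to time~$t$, then use the edge-steps in $[t,2t]$ to pairwise connect the high-degree vertices) is exactly the architecture of the paper's proof, and your computation in the second phase is essentially the paper's. The gap is precisely where you flag it: the uniform lower bound $\dtj \ge c\,(t/T_j)^{c_p}$ for every $j\le t^{\a}$ cannot be obtained from Freedman's (or any other martingale concentration) inequality, and the paper's main technical contribution is a different mechanism for producing many high-degree vertices.

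The obstruction is not the size of the increments or the loop issue, but non-concentration: for a single vertex the normalized degree $M_s=\dsj/\phi(s)$, with $\phi(s)\sim s^{c_p}$, is a positive martingale that converges a.s.\ to a nondegenerate limit $\xi_j$. Concretely, with probability bounded away from zero the vertex $j$ receives no new edge during $[T_j,2T_j]$ (this probability is $\prod_{s=T_j}^{2T_j-1}(1-c_pk/s+\dots)\approx 2^{-c_p}$ for $k=1$), which already forces $M_{2T_j}\le 2^{-c_p}M_{T_j}$; iterating, one sees $\P\big(\dtj<c\,(t/T_j)^{c_p}\big)$ stays bounded below by a positive constant depending only on $c$, uniformly in $t$. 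A union bound over $t^{\a}$ vertices is therefore hopeless. At the level of Freedman's inequality this shows up as follows: the very first increment $M_{T_j+1}-M_{T_j}$ already has magnitude comparable to $M_{T_j}$ itself, so no exponential tail on $\{M_t<M_{T_j}/2\}$ is available.

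The paper circumvents this by grouping consecutive vertices into blocks of a large fixed size $m$, and tracking the hitting times $T^{(m)}_{j,k}$ for the \emph{block} degree. Via a stochastic-domination argument (Lemma~\ref{lema:major}), $\log T^{(m)}_{j,k}-\log T^{(m)}_{j,m}$ is dominated by a sum of independent exponentials whose first $R$ moments are finite provided $R<mc_p(1-\d_m)$; taking $m$ (hence $R$) large yields $\P(d_{t,m}(j)<t^{\b})\le Cj^{R}/t^{R(1-\b/c_p(1-\d_m))}$ for any $\b<c_p$ (Theorem~\ref{teo:cotainf}). This tail \emph{does} survive a union bound over $j\le t^{\a}$, and the paper then takes as its clique candidates the maximum-degree vertex (the ``leader'') inside each block, rather than the first $t^{\a}$ vertices themselves. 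Note also that the paper works with a strictly smaller exponent $\b=\tfrac{1+\e(1-p)/2}{2}<c_p$, not with $\b=c_p$ as you do; the full-strength exponent is unattainable even for blocks.
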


\textbf{Remark. } 
Making use of an ansatz regarding decorrelation inequalities between the random variables that count the number of edges between predefined pairs of vertices, one can  show that the expectation of the number of triangles of~$G_t$ has order $t^{3\frac{(1-p)}{2-p}}\log^2t$. This indicates that the largest clique should be smaller than $t^{(1+\e)\frac{(1-p)}{2-p}}$ for any $\e>0$. We also remark that, putting $p=1$, the expected order of triangles is $\log^2t$, a result that is consistent with \cite{bollrior}.

\textbf{Main ideas and organization.} Our analysis requires, as a first step, upper bounds on the vertices' degree. The proofs we give in Section \ref{sec:upper} for these bounds follow the standard arguments involving martingales and Azuma's inequality to guarantee measure concentration. We also need lower bounds for the vertices' degrees, more specifically, we need the presence of a large number of vertices having very high degree, which is proven in Section \ref{sec:lower}. Unfortunately, in this direction Azuma's inequality leads only to trivial lower bounds. This is due to the fact that a single vertex may not increase its degree for a long a time with a non negligible probability, which obstructs a concentration result as strong as the one given by the upper bound. 

To overcome the above issue, we keep track of the random time in which a vertex achieves a specific degree~$k$. The main idea here is to identify sets of $m$ consecutive vertices. This proceedure justifies the intuitive feeling that, unlike a single vertex, blocks of vertices are more stable in the sense that they do not take very long to achieve a desirable high degree. We formalize this intuition in Lemma \ref{lema:major} which gives an upper bound for the tail of these random times. The proof follows the idea of Lemma $3.1$ of \cite{mori}, which consist in proving a stochastic domination of these random times by a function of independent exponential distributed random variables. Unfortunately, in our case loops may occur due to the edge-step and this fact not only prevents a straightforward application of this lemma but also makes  its generalization harder. 

With the aid of Lemma \ref{lema:major} we stablish a lower bound for the degree of blocks in Theorem \ref{teo:cotainf}. A direct corollary of this theorem is the presence at time $t$ of many vertices having degree greater than~$\sqrt{t}$. This is a fundamental step in order to prove Theorem \ref{teo:community}, which is done in Section \ref{sec:comu}. The idea is that pairs of  vertices with very high degree in $G_t$ cannot remain disconnected until time $2t$ with non negligible probability. Another consequence of Theorem \ref{teo:cotainf}, which we do not make use of here but we must point out, is the existence, w.h.p.,  of a vertex having degree of order close to the expected maximum degree $t^{1-p/2}$. Vertices of high degree play an important roles in the analysis of complex networks. They offer a lower bound for the number of paths of size two, which is important for the calculation of the \textit{global clustering}, and they prove themselves useful for upper bounds for the diameter since they tend to attract more connections; see Section 3 of \cite{remcodiam} for an example.


\section{Upper bound for the degree} \label{sec:upper}

In this section we will establish a simple bound on the probability that a given vertex has a large degree. The result will follow from a application of Azuma's inequality (see Theorem~$2.19$ from~\cite{chungbook}).

Given two positive integers, $i$ and $j$, we will make a slight abuse of notation and let these numbers also  denote respectively the $i$-th  and $j$-th vertices to be added to the process $(G_t)_{t\geq 0}$. The random time in which the $j$-th vertex is added to the graph will be denoted by $T_{j,1}$. Given a vertex $v$, we will let $d_t(v)$ denote the degree of $v$ in $G_t$. 
\begin{definition}
Since the constant $1-p/2$ is going to appear many times throughout this paper, it deserves a special notation. We write 
\begin{equation*}
c_p := 1-p/2.
\end{equation*}
\end{definition}
\begin{proposicao}\label{prop:graumartingal} For each vertex $j$ and each $t_0 \ge j$ the sequence of random variables~$\left(Z_t \right)_{t \ge t_0}$ defined as
\begin{equation}
\label{e:Xtdef}
 Z_t : =\frac{\dtj \Itj}{\prod_{s=1}^{t-1}\left( 1+ \frac{c_p}{s} \right)}
\end{equation}
is a martingale starting from $t_0$.
\end{proposicao}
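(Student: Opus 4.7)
The plan is to verify directly the martingale identity $\mathbb{E}[Z_{t+1}\mid\mathcal{F}_t]=Z_t$ for $t\geq t_0$. Since the normalizing denominator is deterministic and the indicator $\Itj$ is $\mathcal{F}_{t_0}$-measurable (hence $\mathcal{F}_t$-measurable for all $t\geq t_0$), the problem reduces to computing $\mathbb{E}[\dttj\mid\mathcal{F}_t]$ on the event $\{T_{j,1}=t_0\}$, on which vertex $j$ is already present in $G_t$ with positive degree.

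My approach is to condition on the type of the $(t+1)$-th step, which is independent of $\mathcal{F}_t$ by construction. Let $D_t$ denote the sum of degrees in $G_t$; under the indexing convention of the paper this is deterministic and equal to $2t$, since each step adds exactly one edge. In a vertex-step, the existing endpoint of the new edge equals $j$ with probability $\dtj/D_t$, contributing an expected increment of $\dtj/D_t$ to $\dtj$. In an edge-step, each of the two endpoints of the new edge is independently equal to $j$ with probability $\dtj/D_t$; by linearity of expectation, the expected increment of $\dtj$ is $2\,\dtj/D_t$. This is the one spot that benefits from care: linearity absorbs the loop contingency $u_1=u_2=j$ (which gives an increment of $2$) and the single-match case at once, sparing us a separate enumeration of the three possible increments $\{0,1,2\}$.

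Combining both types of step with weights $p$ and $1-p$ yields
\begin{equation*}
\mathbb{E}[\dttj-\dtj\mid\mathcal{F}_t]
=\Bigl(p+2(1-p)\Bigr)\frac{\dtj}{D_t}
=\frac{(2-p)\,\dtj}{2t}
=\frac{c_p\,\dtj}{t},
\end{equation*}
so that $\mathbb{E}[\dttj\mid\mathcal{F}_t]=\dtj\bigl(1+c_p/t\bigr)$. Multiplying by $\Itj$ and dividing by $\prod_{s=1}^{t}(1+c_p/s)=(1+c_p/t)\prod_{s=1}^{t-1}(1+c_p/s)$, the factor $(1+c_p/t)$ cancels and we recover precisely $Z_t$. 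Integrability $\mathbb{E}|Z_t|<\infty$ follows from the deterministic bound $\dtj\leq D_t=2t$, and measurability is immediate. The main (mild) obstacle is thus only the clean bookkeeping in the edge-step, which the linearity-of-expectation trick reduces to a one-line computation.
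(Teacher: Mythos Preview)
Your proof is correct and follows essentially the same approach as the paper: compute the conditional expected increment of $\dtj$ given $\GG_t$, obtain $c_p\,\dtj/t$, and normalize. The only cosmetic difference is that the paper enumerates the three increment values $\{0,1,2\}$ separately in the edge-step, whereas you collapse this via linearity of expectation over the two endpoint choices; the resulting identities are the same.
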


\begin{proof} We consider the process $(G_t)_{t\geq 0}$ to be adapted to a filtration $(\GG_t)_{t\geq 1}$. Define $\D \dtj := \dttj - \dtj$. It is clear that $\D \dtj \in \{0,1,2\}$. Furthermore, conditioned on $\GG_t$, we know the probability that  $\D \dtj $ takes each of these values. So that, assuming that $j$ already exists at time $t$, we have
\begin{equation*}
\begin{split}
\Ed \left[\D \dtj \middle| \GG_t \right] & = 1\cdot p \frac{\dtj}{2t} + 1\cdot(1-p)2\frac{\dtj}{2t}\left(1 - \frac{\dtj}{2t} \right) + 2\cdot(1-p)\frac{(\dtj)^2}{4t^2}
 \\
& = c_p\frac{\dtj}{t}.
\end{split}
\end{equation*}
The information ``vertex $j$ exists at time $t$" can be introduced in the equation using the random variable $\Itj$. Using the fact that~$\Itj$ is $\GG_{t_0}$ measurable, we gain
\begin{equation}\label{eq:expdeg}
\begin{split}
\Ed \left[\dttj \Itj \middle| \GG_t \right] = \left(1+\frac{c_p}{t}\right)\dtj \Itj.
\end{split}
\end{equation}
Dividing the above equation by $\prod_{s=1}^{t}\left( 1+ \frac{c_p}{s} \right)$ we obtain the desired result.
\end{proof}

Now we prove the main result of the section.
\begin{proposicao}[Upper bound for the degree] \label{prop:upperboundeg}There exists a universal positive constant~$C_1$, such that for every vertex $j$ we have that
\[
\P \left( \dtj \ge C_1t^{c_p}\sqrt{\frac{\log(t)}{j^{1-p}}} \right) \le \frac{1}{t^{100}}.
\]
\end{proposicao}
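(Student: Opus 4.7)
The plan is to bootstrap Proposition \ref{prop:graumartingal} with Azuma's inequality applied to $(Z_t)_{t\ge t_0}$, condition on the addition time $T_{j,1}=t_0$ of vertex $j$, and then union-bound over all admissible values $t_0\in\{j,j+1,\dots,t\}$. First I would extract the martingale increment in closed form: combining \eqref{e:Xtdef} with the drift identity \eqref{eq:expdeg} gives
\be
Z_{t+1}-Z_t \;=\; \frac{\Itj}{\prod_{s=1}^{t}(1+c_p/s)}\left(\D\dtj - \frac{c_p\,\dtj}{t}\right).
\ee
Since $\D\dtj\in\{0,1,2\}$ and the sum of all degrees at time $t$ is bounded by $2(t+1)$, both summands inside the parenthesis are at most a universal constant. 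Combining this with the standard asymptotic $\prod_{s=1}^{t}(1+c_p/s)\asymp t^{c_p}$ (which follows from $\sum \log(1+c_p/s)=c_p\log t+O(1)$), I obtain a deterministic bound $|Z_{t+1}-Z_t|\le C/t^{c_p}$.

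With this uniform bound in hand, Azuma's inequality applied on the interval $[t_0,t]$ gives
\be
\P\bigl(|Z_t-Z_{t_0}|\ge\lambda\bigr) \;\le\; 2\exp\!\left(-\frac{\lambda^2}{2C^2\sum_{s=t_0}^{t-1}s^{-2c_p}}\right).
\ee
The crucial arithmetic here is that $2c_p=2-p>1$ for $p\in(0,1)$, so the $p$-series tail converges and $\sum_{s\ge t_0}s^{-2c_p}\le C'\,t_0^{-(2c_p-1)}=C'\,t_0^{-(1-p)}$. Because producing vertex $j$ requires at least $j-1$ vertex-steps, we have $T_{j,1}\ge j-1$ almost surely, so on $\{T_{j,1}=t_0\}$ the variance bound above is at most $C''/j^{1-p}$.

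To translate this back to $\dtj$, note that on $\{T_{j,1}=t_0\}$ we have $d_{t_0}(j)=1$, so $Z_{t_0}\le C'''/t_0^{c_p}$, which is negligible compared to the threshold being tested. Choosing $\lambda:=C_1\sqrt{\log t / j^{1-p}}$ with $C_1$ large enough turns the Azuma tail into at most $t^{-101}$ on each event $\{T_{j,1}=t_0\}$; multiplying by the division factor $\prod_{s=1}^{t-1}(1+c_p/s)\le C''''\,t^{c_p}$ to recover $\dtj$, and then summing over the at most $t$ admissible values of $t_0$, yields the claimed $t^{-100}$ bound.

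The main obstacle is that the martingale increments $Z_{t+1}-Z_t$ involve the random drift term $c_p\dtj/t$, preventing an a priori deterministic bound. The resolution is the trivial but sharp-enough inequality $\dtj\le 2(t+1)$: after dividing by the normalisation $\prod(1+c_p/s)\asymp t^{c_p}$, it produces exactly the increment bound $O(t^{-c_p})$ needed so that, given the convergence of $\sum s^{-2c_p}$, Azuma concentrates $Z_t$ within a window of width $C'/\sqrt{j^{1-p}}$, equivalent to a window of width $C_1 t^{c_p}\sqrt{\log t / j^{1-p}}$ for $\dtj$.
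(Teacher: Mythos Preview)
Your proposal is correct and follows essentially the same route as the paper: bound the martingale increment of $(Z_t)$ via $\Delta d_t(j)\le 2$ and $d_t(j)\le 2t$ to get $|Z_{t+1}-Z_t|=O(t^{-c_p})$, sum the squares over $s\ge t_0$ to obtain a variance proxy $O(t_0^{-(1-p)})$, apply Azuma with $\lambda\asymp\sqrt{\log t/j^{1-p}}$, and union-bound over the at most $t$ possible values of $t_0=T_{j,1}$. The only cosmetic differences are that the paper expresses $\phi(t)=\prod(1+c_p/s)$ via Gamma-function asymptotics rather than the log-sum, and centres Azuma around $\Ed[Z_{t_0}]=\phi(t_0)^{-1}\P(T_{j,1}=t_0)$ rather than using $d_{t_0}(j)=1$ directly.
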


\begin{proof} We define $\phi(t) := \prod_{s=1}^{t-1}\left( 1+ \frac{c_p}{s} \right)$. Note that we can write 
\begin{equation*}
\frac{\dtj}{\phi(t)}= \sum_{t_0=j}^t \frac{\dtj}{\phi(t)} \mathbb{1}_{\{ T_{j,1} = t_0\}},
\end{equation*}
and by Proposition~\ref{prop:graumartingal} each term in the sum is a martingale. We want to apply Azuma's inequality for each summand, but first we need some bounds on~$\phi$. We get the asymptotic behaviour of $\phi$ by noting that we can express its rule by a ratio of Gamma functions, as follows by using the Gamma function's duplication property:
\[
\phi(t) = \prod_{s=1}^{t-1}\left( 1+ \frac{c_p}{s} \right) = \frac{\G(t+c_p)}{\G(1+c_p)\G(t)}.
\]
And, by property $6.1.46$ of~\cite{handbook}, $\phi(t) \sim t^{c_p}$. This means that, for some constant $c_1>0$, $\phi(t) > c_1t^{c_p}$.

In order to apply Azuma's inequality, we must bound the variation of the random variable~$Z_t$ defined in~\eqref{e:Xtdef}, which satisfies the following upper bound
\begin{equation}
a_t:=\left| \frac{d_{t+1}(j) \mathbb{1}_{\{T_{j,1} = t_0 \}}}{\phi(t+1)} - \frac{\dtj \mathbb{1}_{\{T_{j,1} = t_0 \}}}{\phi(t)} \right|  \le \left|\frac{ \dttj - \left( 1 + \frac{c_p}{t} \right)\dtj }{\phi(t+1)}\right|
\le \frac{2+c_p}{\phi(t+1)},
\end{equation}
since $\D \dtj \le 2$ and $\dtj \le 2t$. By the above discussion about $\phi$ we have that $a_t^2 < \frac{(2+2c_p)^2}{c_1^2 t^{2-p}}$, which implies $\sum_{s=t_0}^t a_s^2 < c' t_0^{-(1-p)}$ for some constant $c'>0$. Then, applying Azuma's inequality, we obtain
\begin{equation}
\P\left(\left| \frac{\dtj \mathbb{1}_{\{T_{j,1} = t_0 \}}}{\phi(t)} - \Ed\left[\frac{d_{t_0}(j) \mathbb{1}_{\{T_{j,1} = t_0 \}}}{\phi(t_0)}\right] \right| > \l \right) \le 2\exp\left(-\l^2t_0^{1-p}/2c'\right).
\end{equation}
Note that
\[
\Ed\left[\frac{d_{t_0}(j) \mathbb{1}_{\{T_{j,1} = t_0 \}}}{\phi(t_0)}\right] =\phi(t_0)^{-1} \P ( T_{j,1} = t_0 )\leq \phi(t_0)^{-1}.
\]
By choosing $\l = c_2\sqrt{t_0^{p-1}\log(t)}$, where~$c_2$ is a sufficiently large positive constant depending only on $p$, we gain
\[
\P\left( \dtj\mathbb{1}_{\{T_{j,1} = t_0 \}} \ge c_2 t^{c_p}\sqrt{\frac{\log(t)}{t_0^{1-p}}}+\frac{\phi(t)}{\phi(t_0)}\right) \le \frac{1}{t^{101}}.
\] 
Using the union bound and the asymptotic behaviour of~$\phi$, we obtain the result for all possibles times $t_0\ge j$: there exists a positive constant~$C_1$ depending only on~$p$ such that
\[
\P\left(\bigcup_{t_0 = j}^t \left\lbrace\dtj\mathbb{1}_{\{T_{j,1} = t_0 \}} \ge C_1t^{c_p}\sqrt{\frac{\log(t)}{j^{1-p}}}\right\rbrace \right) \le \frac{1}{t^{100}}.
\] 
This finishes the proof.
\end{proof}
Using the union bound again, we obtain the following result:
\begin{corolario}[Upper bound for the maximum degree]\label{cor:maxdeg} Let $\dmaxt$ denote the maximum degree among all the vertices of $G_t$. Then, there exists a universal positive constant $C_2$ such that
\[
\P\left(\dmaxt \ge C_2t^{c_p}\sqrt{\log(t)}\right) \le \frac{1}{t^{99}}.
\]
\end{corolario}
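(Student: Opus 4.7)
The plan is to apply a union bound over all vertices in $G_t$ using the per-vertex tail estimate from Proposition~\ref{prop:upperboundeg}. The key observation is that the bound in that proposition is weakest (largest) when $j$ is smallest, so bounding the degree of the oldest vertex uniformly yields a bound for every vertex.

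First I would note that the total number of vertices present in $G_t$ is at most $t+1$, since the process starts with a single vertex at $G_0$ and each step either introduces at most one new vertex (during a vertex-step) or no new vertex (during an edge-step). Labelling vertices $1,2,\dots$ in the order of their arrival, Proposition~\ref{prop:upperboundeg} gives, for every vertex $j$,
\[
\P\!\left( d_t(j) \geq C_1 t^{c_p}\sqrt{\frac{\log t}{j^{1-p}}}\right) \leq \frac{1}{t^{100}}.
\]
Since $j \geq 1$, we have the uniform bound $C_1 t^{c_p}\sqrt{\log t / j^{1-p}} \leq C_1 t^{c_p}\sqrt{\log t}$, so for every vertex $j$ present in $G_t$,
\[
\P\!\left( d_t(j) \geq C_1 t^{c_p}\sqrt{\log t}\right) \leq \frac{1}{t^{100}}.
\]

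Then I would apply the union bound over all $j \in \{1,\dots,t+1\}$, obtaining
\[
\P\!\left( d_{\max}(G_t) \geq C_1 t^{c_p}\sqrt{\log t}\right) \leq \frac{t+1}{t^{100}} \leq \frac{1}{t^{99}}
\]
for $t$ sufficiently large. Choosing $C_2 = C_1$ (or absorbing the factor $(t+1)/t \leq 2$ into the constant if one is picky about small $t$) yields the claim.

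There is no real obstacle here: the statement is a direct consequence of Proposition~\ref{prop:upperboundeg} together with the fact that at most $t+1$ vertices have been born by time $t$. The only minor subtlety is making sure one uses the worst-case $j=1$ to get a bound independent of $j$ before taking the union, which is harmless since the bound we want is allowed to depend on $t$ alone.
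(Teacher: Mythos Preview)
Your proposal is correct and follows exactly the approach the paper uses: the corollary is stated immediately after the line ``Using the union bound again, we obtain the following result,'' and your argument is precisely that union bound over the at most $t+1$ vertices, taking the worst case $j=1$ in Proposition~\ref{prop:upperboundeg}.
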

\section{Lower bounds for the degree}\label{sec:lower}
%
%
This section is devoted to proving the results needed to state a useful lower bound on the degree of the vertices that entered the random graph early in the history of the process. We prove two lemmas that let us control the tail of the random times  $T_{j,k}^{(m)}$, defined below,  before proving the main result, Theorem~\ref{teo:cotainf}. First we need a new notation:
\begin{definition} Fix a vertex $j$  and two integers $m,k \ge 1$. We define the random time
\[
T^{(m)}_{j,k} :=  \inf_{t\ge 1}\left\lbrace \sum_{i=(j-1)m+1}^{jm}d_t (i) = k \right\rbrace.
\]
\end{definition}
We also write $T_{j,k}:=T^{(1)}_{j,k}$. In other words, $T_{j,k}$ is the first time that the $j$-th vertex has degree at least $k$. $T^{(m)}_{j,k}$ can then be explained in the following way: assume that we identify all the vertices $1$ through $m$, then identify all the vertices $m+1$ through $2m$ and so on. We let $d_{t,m}(j)$ denote the degree of the $j$-th block of $m$ vertices. Then $T^{(m)}_{j,k}$ is the first time that~$d_{t,m}(j)$ is larger than, or equals to, $k$. 
\begin{lema}\label{lema:major} Let $0<\gamma<(c_p^{-1}-1)$. For large enough $m \in \N$ and $k \ge m$ and $j \ge(m^{2/(1-p)}+1)$, we can construct a sequence~$\eta_m, .. , \eta_{k}$ of independent random variables, with 
\begin{equation}
\label{eq:defeta}
\eta_i \eqd \Exp{\Big(c_p\Big(1-\frac{1-p}{2(2-p)i^{\gamma}}\Big)i \Big)},\text{ for }i=m,\dots,k,
\end{equation}
such that the whole sequence is independent of $T^{(m)}_{j,m},..,T^{(m)}_{j,k+1}$, and such that
\[
\P \left( T^{(m)}_{j,k+1} > t \right) \le \P \left( T^{(m)}_{j,m}\exp \left( \sum_{i = m}^k \eta_i \right) > t \right) + \frac{m}{[(j-1)m]^{99}}
\]
\end{lema}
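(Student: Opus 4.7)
My approach mirrors Lemma~3.1 of~\cite{mori}, the crucial new feature being the loop-induced correction in the one-step transition probability of the block degree. Write $D_t := \sum_{v=(j-1)m+1}^{jm} d_t(v)$ for the aggregate degree of the $j$-th block. An enumeration of cases (vertex-step contributing $\frac{i}{2t}$; edge-step contributing $2\cdot\frac{i}{2t}(1-\frac{i}{2t})$ for jumps of size $1$ and $(\frac{i}{2t})^2$ for jumps of size $2$) gives, on $\{D_t=i\}$,
\[
q_i(t)\;:=\;\P(D_{t+1}>i\mid\GG_t)\;=\;\frac{c_p\,i}{t}-\frac{(1-p)\,i^2}{4t^2}.
\]
Crucially this depends only on $i$ and $t$, so the block-degree process is Markovian. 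A direct rearrangement shows that $q_i(t)\ge \lambda_i/t$, with $\lambda_i := c_p\bigl(1-\frac{1-p}{2(2-p)i^\gamma}\bigr)i$, is equivalent to the condition $t\ge i^{1+\gamma}$. This identifies the exponential rate appearing in~\eqref{eq:defeta}.

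Define $W_i := T^{(m)}_{j,i+1}-T^{(m)}_{j,i}$ and the ``good-time'' event $A := \bigcap_{i=m}^{k+1}\{T^{(m)}_{j,i}\ge i^{1+\gamma}\}$. Conditioning on $T^{(m)}_{j,i}=t_0$ and on $A$, iterating the Markov property gives
\[
\P(W_i>w\mid T^{(m)}_{j,i}=t_0)\;\le\;\prod_{s=0}^{w-1}\!\bigl(1-q_i(t_0+s)\bigr)\;\le\;\exp\!\Bigl(-\lambda_i\!\!\sum_{s=0}^{w-1}\tfrac{1}{t_0+s}\Bigr)\;\le\;\Bigl(\tfrac{t_0}{t_0+w}\Bigr)^{\!\lambda_i}.
\]
This is exactly the tail of $t_0(e^{\eta_i}-1)$ for $\eta_i\sim\Exp(\lambda_i)$, so $\log(T^{(m)}_{j,i+1}/T^{(m)}_{j,i})$ is conditionally stochastically dominated by $\Exp(\lambda_i)$ on $A$. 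Enlarging the probability space and performing a level-by-level quantile coupling with fresh independent uniforms produces jointly independent variables $\eta_m,\dots,\eta_k$ with $\eta_i\sim\Exp(\lambda_i)$, jointly independent of $T^{(m)}_{j,m},\dots,T^{(m)}_{j,k+1}$, such that on $A$, $T^{(m)}_{j,i+1}\le T^{(m)}_{j,i}e^{\eta_i}$ for every $i$; telescoping then yields $T^{(m)}_{j,k+1}\le T^{(m)}_{j,m}\exp(\sum_{i=m}^{k}\eta_i)$ on $A$.

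It remains to control $\P(A^c)$. If $A$ fails at level $i$, then $D_t$ reaches $i$ at some time $t<i^{1+\gamma}$, so some block vertex $v\ge (j-1)m+1$ satisfies $d_t(v)\ge i/m$ at that time. Proposition~\ref{prop:upperboundeg} bounds $d_t(v)\le C_1 t^{c_p}\sqrt{\log t/v^{1-p}}$ with failure probability $t^{-100}$. Inverting this bound for $v\ge (j-1)m+1$ and exploiting $\gamma<c_p^{-1}-1$ (so the exponent $1/c_p$ of $i$ in the resulting lower bound on $t$ exceeds $1+\gamma$) together with $j\ge m^{2/(1-p)}+1$ (so the factor $((j-1)m)^{(1-p)/(2c_p)}$ dominates $m^{1/c_p}$), one verifies that for $m$ large the inequality $d_t(v)\ge i/m$ in fact forces $t\ge i^{1+\gamma}$, contradicting the failure of $A$. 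A union bound over the $m$ block-vertices and the tail estimate $\sum_{t\ge (j-1)m+1} t^{-100}\le C/[(j-1)m]^{99}$ then yields $\P(A^c)\le m/[(j-1)m]^{99}$, completing the proof.

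\textbf{Anticipated obstacle.} The delicate issue is the tension between the loop-correction term and the coupling's range of validity: the term $-(1-p)i^2/(4t^2)$ is small relative to $c_p i/t$ only for $t\gg i$, which forces both the time restriction $t\ge i^{1+\gamma}$ and the multiplicative slack $1-\frac{1-p}{2(2-p)i^\gamma}$ in the exponential rates. This in turn propagates into the hypothesis $j\ge m^{2/(1-p)}+1$, which supplies enough ``head-start'' so that $T^{(m)}_{j,m}$ itself exceeds $m^{1+\gamma}$ with the required probability. A secondary technical point is constructing the $\eta_i$'s so that they are genuinely jointly independent of the $T^{(m)}_{j,\cdot}$, rather than merely marginally exponential; this is handled on an enlarged probability space by the layered quantile coupling, injecting the required independence via auxiliary uniforms at each level.
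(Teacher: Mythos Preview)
Your strategy is essentially the paper's: both adapt M\'ori's argument to absorb the loop-correction term by restricting to times $t\ge i^{1+\gamma}$, and both control the resulting error $\sum_i\P(T^{(m)}_{j,i}\le i^{1+\gamma})$ via Proposition~\ref{prop:upperboundeg} and the pigeonhole over the $m$ block vertices. The paper, however, never builds a pathwise coupling: it takes the $\eta_i$ to be \emph{fresh} exponentials from the outset and works purely with tail probabilities, establishing $\P(T^{(m)}_{j,i+1}>t)\le err(i)+\P(T^{(m)}_{j,i}e^{\eta_i}>t)$ directly and then iterating this inequality down to $i=m$.

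Your coupling route is a legitimate alternative, but one claim needs repair: you cannot have the $\eta_i$ simultaneously satisfy the pathwise inequality $T^{(m)}_{j,i+1}\le T^{(m)}_{j,i}e^{\eta_i}$ on $A$ \emph{and} be jointly independent of $T^{(m)}_{j,m+1},\dots,T^{(m)}_{j,k+1}$. In any quantile coupling, $\eta_i$ is a (randomised) function of $(T^{(m)}_{j,i},T^{(m)}_{j,i+1})$, hence measurable with respect to the later $T$'s; if instead you define $\eta_i$ from a genuinely fresh uniform, no pathwise comparison with $W_i$ is available. What the coupling \emph{does} give you is that the $\eta_i$ are mutually independent and jointly independent of $T^{(m)}_{j,m}$ (since, conditionally on the history up to $T^{(m)}_{j,i}$, the uniform $V_i$ driving $\eta_i$ is independent of that history). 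This is enough: the right-hand side $\P\bigl(T^{(m)}_{j,m}\exp(\sum_i\eta_i)>t\bigr)$ depends only on the joint law of $(T^{(m)}_{j,m},\eta_m,\dots,\eta_k)$, which coincides with the law obtained from fresh exponentials, so you may replace the coupled $\eta_i$ by fresh ones at the final step to recover the lemma exactly as stated. With that amendment your argument is complete.
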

\begin{proof} We follow the idea of the proof of Lemma $3.1$ in \cite{mori}. But in our context the existence of the \textit{edge-step} demands more attention and prevents a straightforward application of this lemma.

We begin by constructing the $k+1-m$ independent random variables $\eta_m, .. , \eta_{k}$ with distribution given by \eqref{eq:defeta}, the whole sequence being independent of the random times $T^{(m)}_{j,m},..,T^{(m)}_{j,k+1}$. Observe that
\begin{equation}
\label{eq:tjn}
\begin{split}
\P \left( T_{j,k+1}^{(m)} > t \right) & = \sum_{s=k}^{\infty} \P \left( T_{j,k+1}^{(m)} > t \middle| T_{j,k}^{(m)} = s\right)\P \left( T_{j,k}^{(m)} = s \right) \\ 
& = \sum_{s=k}^{k^{1+\gamma}} \P \left( T_{j,k+1}^{(m)} > t \middle| T_{j,k}^{(m)} = s\right)\P \left( T_{j,k}^{(m)} = s \right) \\
&\quad + \sum_{s=k^{1+\gamma}}^{\infty} \P \left( T_{j,k+1}^{(m)} > t \middle| T_{j,k}^{(m)} = s\right)\P \left( T_{j,k}^{(m)} = s \right) \\ 
& \le \P \left( T_{j,k}^{(m)} \le k^{1+\gamma} \right) + \sum_{s=k^{1+\gamma}}^{\infty} \P \left( T_{j,k+1}^{(m)} > t \middle| T_{j,k}^{(m)} = s\right)\P \left( T_{j,k}^{(m)} = s \right).
\end{split}
\end{equation}
We obtain an upper bound for the term $\P \left( T_{j,k+1}^{(m)} > t \middle| T_{j,k}^{(m)} = s\right)$ in the following way: once the vertex (block) $j$ reaches degree $k$ at time $s$, we must avoid choosing $j$ at all the subsequent steps until time $t$. We note that there exists the possibility that $T_{j,k+1}^{(m)}=T_{j,k}^{(m)}$, in the case that we add a loop to $j$ at time $T_{j,k}^{(m)}$, but in this case $\P \left( T_{j,k+1}^{(m)} > t \middle| T_{j,k}^{(m)} = s\right)$ is equal to $0$, and our calculations remain the same. Noting that at each step~$r+1$ we choose the vertex~$j$ with probability 
\[
c_p\frac{d_r(j)}{r} - \frac{(1-p)d_r^2(j)}{4r^2},
\]
and recalling that $c_p:=1-p/2$, we obtain, for $s\geq k^{1+\gamma}$,
 \begin{equation} \label{eq:prod}
 	\begin{split}
    \P \left( T^{(m)}_{j, k + 1} > t \middle| T^{(m)}_{j, k} = s \right) & \le \prod_{r = s}^{t - 1} \left( 1 -
    \frac{c_p k}{r} + \frac{(1-p)k^2}{4r^2} \right) \\
    & = \prod_{r = s}^{t - 1} \left[ 1 -
    \frac{c_p k}{r}\left(1 - \frac{(1-p)k}{2(2-p)r} \right) \right] \\
    & \le \prod_{r = s}^{t - 1} \left[ 1 -
    \frac{c_p k}{r}\left(1 - \frac{(1-p)}{2(2-p)k^{\gamma}} \right) \right].
    \end{split}
  \end{equation}
We introduce the notation
\begin{equation}
\d_k:=\frac{(1-p)}{2(2-p)k^{\gamma}}.
\end{equation} 
Observe that 
\begin{equation*}
1- \frac{c_pk(1-\d_k)}{r} \le \exp{\left( \frac{1}{r}\right)}^{-c_pk(1-\d_k)}
\le \left( 1 + \frac{1}{r}\right)^{-c_pk(1-\d_k)}
 = \left( \frac{r}{r+1}\right)^{c_pk(1-\d_k)}.
\end{equation*}
Plugging the above inequality into (\ref{eq:prod}), noting that this results in a telescopic product, and recalling the definition of $\eta_i$, we get
\[  \P \left( T^{(m)}_{j, k + 1} > t \middle| T^{(m)}_{j, k} = s \right) \le \left( \frac{s}{t} \right)^{c_p(1-\d_k)k} = \P\left(T_{j,k}^{(m)} e^{\eta_k} > t \middle| T_{j,k}^{(m)} = s \right) .
\]
Combining the above inequality with (\ref{eq:tjn}), we obtain
\begin{equation} \label{eq:picachu}
\begin{split}
\P \left( T_{j,k+1}^{(m)} > t \right) & \le \P \left( T_{j,k}^{(m)} \le k^{1+\gamma} \right) + \sum_{s=k^{1+\gamma}}^{\infty} \P\left(T_{j,k}^{(m)} e^{\eta_k} > t \middle| T_{j,k}^{(m)} = s \right) \P \left( T_{j,k}^{(m)} = s \right) \\
& \le \P \left( T_{j,k}^{(m)} \le k^{1+\gamma} \right) + \sum_{s=k}^{\infty} \P\left(T_{j,k}^{(m)} e^{\eta_k} > t \middle| T_{j,k}^{(m)} = s \right) \P \left( T_{j,k}^{(m)} = s \right) \\
& = \P \left( T_{j,k}^{(m)} \le k^{1+\gamma} \right) + \P\left(T_{j,k}^{(m)} e^{\eta_k} > t \right).
\end{split}
\end{equation}
Write 
\[
err(k) := \P \left( T_{j,k}^{(m)} \le k^{1+\gamma} \right).
\]
By the above equation, we also have, recalling that $\eta_k$ is independent from both $T_{j,k-1}^{(m)}$ and~$\eta_{k-1}$,
\begin{equation}
\begin{split}
\P\left(T_{j,k}^{(m)} e^{\eta_k} > t \right) & = \int_{0}^{\infty} \P\left(T_{j,k}^{(m)} > \frac{t}{s} \right)\P\left(e^{\eta_k} = \mathrm{d} s \right) \\
& \le \int_{0}^{\infty} \left[\P\left(T_{j,k-1}^{(m)} e^{\eta_{k-1}} > \frac{t}{s} \right) + err\left(k-1\right)\right]\P\left(e^{\eta_k} = \mathrm{d}s \right) \\
& \le \P\left(T_{j,k-1}^{(m)} e^{(\eta_k + \eta_{k-1})} > t \right) + err(k-1),
\end{split}
\end{equation}
where $\P\left(e^{\eta_k} = \mathrm{d}s \right)$ denotes the measure in $\R$ induced by the random variable $e^{\eta_k}$. Proceeding in this way, we obtain
\[
\P \left( T_{j,k+1}^{(m)} > t \right) \le \P\left(T_{j,m}^{(m)}\exp\Bigg(\sum_{i=m}^{k}\eta_i\Bigg) > t \right) + \sum_{n=m}^k err(n).
\]
It remains to be shown that the sum of errors is sufficiently small. First we note that: 
\[
\left\lbrace T_{j,n}^{(m)} \le n^{1+\gamma} \right\rbrace = \left\lbrace \sum_{i=m(j-1)+1}^{mj} d_{n^{1+\gamma}}(i) \ge n \right\rbrace.
\]
But for small $n$ and large $j$ the above event is actually empty, since none of the $m$ vertices in the $j$-th block has enough time to be added by the process. In order to the above event to be non-empty, we need at least one of the random variables $d_{n^{1+\gamma}}(i)$, for $i \in \{ (j-1)m+1,\dots,jm\}$ to be possibly not identically null. For this, $n$ and~$j$ must satisfy the inequality below: 
\[
n^{1+\gamma} \ge (j-1)m+1 \iff n \ge \left[ (j-1)m+1\right]^{\frac{1}{1+\gamma}}.
\]
A straightforward application of Dirichlet's pigeon-hole principle shows that 
\[
\left\lbrace T_{j,n}^{(m)} \le n^{1+\gamma} \right\rbrace \subset \bigcup_{i=(j-1)m+1}^{jm} \left\lbrace d_{n^{1+\gamma}}(i)\ge \frac{n}{m}\right\rbrace.
\]
We note that, if
\begin{equation}
\label{e:ifjnm}
\frac{n}{m}\ge C_1\frac{n^{(1+\g)c_p}\sqrt{(1+\g)\log n}}{(j-1)^{\frac{1-p}{2}}},
\end{equation}
then
\[
\left\lbrace d_{n^{1+\gamma}}(i)\ge \frac{n}{m}\right\rbrace \subset \left\lbrace d_{n^{1+\gamma}}(i)\ge C_1\frac{n^{(1+\g)c_p}\sqrt{(1+\g)\log n}}{(j-1)^{\frac{1-p}{2}}}\right\rbrace.
\]
But \eqref{e:ifjnm} is always valid for large~$n$, since $(1+\gamma)c_p < 1$ and $j \ge m^{2/(1-p)}+1$. Since $i > (j-1)m$, Proposition~\ref{prop:upperboundeg} implies
\[
\mathbb{P}\left( T_{j,n}^{(m)} \le n^{1+\gamma}\right) \le mn^{-100(1+\g)}.
\]
Consequently
\begin{equation*}
  \sum_{n=1}^k err(n)=\!\!\!\!\!\!\!\!\sum_{n=\left[ (j-1)m+1\right]^{\frac{1}{1+\gamma}}}^k \!\!\!\!\!\!\!\!  err(n) \le \frac{m}{[(j-1)m]^{99}},
\end{equation*}
which concludes the proof.
\end{proof}

\begin{lema} \label{lemma:momentoR} For any vertex $j$ and all $m,R \in \mathbb{N}$, there exists a positive constant $c=c_{m,R,p}>0$ such that
\[ \Ed[T_{jm,1}^R] \le c j^R. \] 
\end{lema}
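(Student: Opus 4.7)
The random time $T_{jm,1}$ at which the $(jm)$-th vertex is added to the process is measurable with respect to the i.i.d.\ Bernoulli sequence $(Z_t)_{t\ge 1}$ that drives the choice between vertex- and edge-steps. Since $G_0$ contains a single vertex and each vertex-step adds exactly one vertex while edge-steps add none, the $(jm)$-th vertex joins the graph at precisely the time of the $(jm-1)$-th vertex-step; hence
\[
T_{jm,1} \;=\; \inf\Bigl\{\,t\ge 1 : \textstyle\sum_{s=1}^{t} Z_s = jm-1\Bigr\}
\]
has a negative binomial distribution with parameters $jm-1$ and $p$. Equivalently,
\[
T_{jm,1}\;\eqd\; G_1+G_2+\cdots+G_{jm-1},
\]
where the $G_i$ are i.i.d.\ $\mathrm{Geometric}(p)$ random variables (the inter-success waiting times).

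First I would set $M_{R,p}:=\Ed[G_1^R]^{1/R}$, which is finite since every moment of a $\mathrm{Geometric}(p)$ random variable exists. Then Minkowski's inequality in $L^R(\P)$ gives
\[
\Ed[T_{jm,1}^R]^{1/R} \;\le\; \sum_{i=1}^{jm-1}\Ed[G_i^R]^{1/R} \;=\; (jm-1)\,M_{R,p} \;\le\; m\,M_{R,p}\,j.
\]
Raising to the $R$-th power yields the desired bound with $c_{m,R,p}:=(m\,M_{R,p})^R$, finishing the proof.

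The argument is essentially routine; the one place that deserves care is the identification of $T_{jm,1}$ with a negative binomial waiting time, which is the point at which the edge-step/vertex-step distinction enters. I do not anticipate any real obstacle. An alternative route would be a Chernoff tail bound for $\sum_{s=1}^t Z_s$ followed by $\Ed[T_{jm,1}^R]=R\int_0^\infty t^{R-1}\P(T_{jm,1}>t)\,\mathrm{d}t$, but Minkowski delivers the $j^R$ dependence directly and with cleaner constants, so that is the route I would take.
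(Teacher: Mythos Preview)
Your proposal is correct and follows essentially the same route as the paper: both identify $T_{jm,1}$ as (up to a harmless additive constant) a sum of $jm-1$ i.i.d.\ $\mathrm{Geometric}(p)$ random variables, i.e.\ a negative binomial waiting time. The only difference is in the final bookkeeping step---the paper invokes the $R$-th derivative of the negative binomial moment generating function, whereas you use Minkowski's inequality to get $\|T_{jm,1}\|_R\le (jm-1)\,\|G_1\|_R$ directly; your route is arguably cleaner and yields an explicit constant, but the substance is the same.
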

\begin{proof}
Note that we can write
\[
T_{jm,1}=1+\sum_{i=1}^{jm-1}(T_{i+1,1}-T_{i,1}) ,
\]
so that $T_{jm,1}$ is distributed as $1$ plus a sum of $jm-1$ independent geometric random variables of parameter $p$. Recall that a random variable which follows a negative binomial distribution of parameters~$jm-1$ and~$p$ has moment generating function
\[
\mathcal{G}(s) = \frac{(1-p)^{jm-1}}{(1-pe^s)^{jm-1}}.
\]
By taking the $R$-th derivative of $\mathcal{G}(s)$ and evaluating it at $0$, one can conclude the Lemma's statement.
\end{proof}
Now  we state and prove the main theorem of this section.
\begin{teorema}[Lower bound for the degree] \label{teo:cotainf} Fix $m$ sufficiently large, and let \[1 < R < mc_p(1-\d_m).\] Then there exists a positive constant $c = c(m, R, p)$ such that, for \[\beta\in(0,c_p(1-\d_m)) \textit{ and } j \geq m^{\frac{2}{1-p}} + 1\] we have:
\[ 
\P\left( d_{t,m}(j) < t^{\b} \right) \le c\frac{j^R}{t^{R-\b R/c_p(1-\d_m)}} +\frac{m}{[(j-1)m]^{99}} .
\]
\end{teorema}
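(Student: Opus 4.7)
My plan is to translate $\{d_{t,m}(j)<t^\b\}$ into a tail event for a block hitting time, apply Lemma~\ref{lema:major} to dominate that tail, and then use a Markov inequality at the $R$-th moment. Setting $k:=\lceil t^\b\rceil-1$, the event $\{d_{t,m}(j)<t^\b\}$ coincides with $\{T^{(m)}_{j,k+1}>t\}$, so provided $k\ge m$ (which holds for all sufficiently large $t$; smaller $t$ can be absorbed into the constant $c$, since the first term of the claim already exceeds $1$ there), Lemma~\ref{lema:major} gives
\[
\P\!\left(d_{t,m}(j)<t^\b\right)\le \P\!\left(T^{(m)}_{j,m}\exp\!\Big(\sum_{i=m}^k\eta_i\Big)>t\right)+\frac{m}{[(j-1)m]^{99}}.
\]
The second summand already matches the statement, so it remains to bound the first probability. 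Using the independence of $\eta_m,\dots,\eta_k$ and their independence from $T^{(m)}_{j,m}$ (both provided by Lemma~\ref{lema:major}), Markov's inequality at the $R$-th power yields
\[
\P\!\left(T^{(m)}_{j,m}\exp\!\Big(\sum_{i=m}^k\eta_i\Big)>t\right)\le t^{-R}\,\Ed[(T^{(m)}_{j,m})^R]\,\prod_{i=m}^{k}\Ed[e^{R\eta_i}].
\]

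For the first factor I would note that $T^{(m)}_{j,m}\le T_{jm,1}$, because at the instant the $jm$-th vertex appears every vertex in the $j$-th block already has degree at least one, so the total block-degree reaches $m$; Lemma~\ref{lemma:momentoR} then gives $\Ed[(T^{(m)}_{j,m})^R]\le c\,j^R$. For the MGF factor, write $\m:=c_p(1-\d_m)$ and observe that $\d_i\le\d_m$ for $i\ge m$, so each $\eta_i$ has rate at least $\m i$ and is therefore stochastically dominated by an $\Exp(\m i)$ random variable. With $\a:=R/\m$ this produces the clean telescoping identity
\[
\prod_{i=m}^{k}\Ed[e^{R\eta_i}]\le\prod_{i=m}^{k}\frac{\m i}{\m i-R}=\prod_{i=m}^{k}\frac{i}{i-\a}=\frac{\G(k+1)\,\G(m-\a)}{\G(k+1-\a)\,\G(m)},
\]
which is legitimate because the hypothesis $R<m\m$ keeps all Gamma arguments positive. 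The classical asymptotic $\G(k+1)/\G(k+1-\a)\sim k^{\a}$ then bounds the product by $C\,k^{R/\m}$ for some $C=C(m,R,p)$.

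Combining everything with $k\le t^\b$ gives
\[
\P(d_{t,m}(j)<t^\b)\le \frac{cC\,j^R\, t^{\b R/\m}}{t^R}+\frac{m}{[(j-1)m]^{99}}= c'\frac{j^R}{t^{R-\b R/c_p(1-\d_m)}}+\frac{m}{[(j-1)m]^{99}},
\]
which is exactly the claim. The main obstacle, in my view, is making the exponent of $t$ in the first term sharp: a cruder estimate such as $-\log(1-x)\le x/(1-x)$ applied factor-by-factor to $\log\prod \Ed[e^{R\eta_i}]$ introduces an extra multiplicative factor $(1-R/m\m)^{-1}>1$ in the exponent of $k$, which would spoil the advertised $R/c_p(1-\d_m)$. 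The two features that rescue the sharp exponent are the stochastic-domination step, which collapses the $i$-dependence in the rate of $\eta_i$, and the exact Gamma-function evaluation of the resulting telescoping product; together they extract the right power of $t$ while dumping all $m$- and $R$-dependence into the constant $C$.
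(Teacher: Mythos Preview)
Your proof is correct and follows essentially the same route as the paper's: both translate to the tail of $T^{(m)}_{j,k+1}$, invoke Lemma~\ref{lema:major}, apply Markov's inequality at the $R$-th moment, replace $\d_i$ by $\d_m$ to obtain a telescoping Gamma product, and close with $T^{(m)}_{j,m}\le T_{jm,1}$ together with Lemma~\ref{lemma:momentoR}. The only cosmetic difference is that the paper conditions on $T^{(m)}_{j,m}=n$ before applying Markov to the exponential sum, whereas you apply Markov to the full product and factor the expectation via independence---the resulting bounds are identical.
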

%
\begin{proof} Fix $m$ sufficiently large. By Lemma ~\ref{lema:major}, 
\begin{equation}
\label{eq:startdeg}
\P(d_{t,m}(j) < t^{\b}) = \P(T_{j,t^{\b}}^{(m)} > t) \le \P\left(T_{j,m}^{(m)} \exp\left(\sum_{i=m}^{t^{\b}} \eta_i \right) > t \right) +  \frac{m}{[(j-1)m]^{99}}.
\end{equation}
We need to control the first term of the inequality's right hand side. We have that
\begin{equation} \label{eq:probexptilde}
\P\left(T_{j,m}^{(m)}\exp\left(\sum_{i=m}^{t^{\b}} \eta_i\right) > t\right) = \sum_{n = 1}^{\infty}\P\left(  \exp \left( \sum_{i = m}^{t^{\b}} \eta_i \right) > t/n \right)\P\left(T_{j,m}^{(m)} = n \right),
\end{equation}
because $T_{j,m}^{(m)}$ is independent of $\eta_i$ for all possible values of $i$. Since $\eta_i \eqd \Exp(ic_p(1-\d_i))$, we have that its moment generating function is given by
\[
\mathcal{G}_i(s) = \frac{1}{1-\frac{s}{ic_p(1-\d_i)}},
\]
for $s<ic_p(1-\d_i)$. Then, for $1 < R < mc_p(1-\d_m)$, Markov's Inequality implies
\begin{equation}
	\label{eq:expetabound}
	\begin{split}
		\P\left(\exp \left(\sum_{i=m}^{t^{\b}} \eta_i \right) > t/n \right) & 
		\le \frac{n^R}{t^R} \prod_{i=m}^{t^{\b}} \left( 1-\frac{R/[c_p(1-\d_i)]}{i}\right)^{-1} \\
		& \le \frac{n^R}{t^R} \prod_{i=m}^{t^{\b}} \left( 1-\frac{R/[c_p(1-\d_m)]}{i}\right)^{-1}.
	\end{split}
\end{equation}
The last product can be written in terms of the Gamma Function, using its multiplicative property. Note that 
\[
\prod_{i=m}^{t^{\b}}\left(1-\frac{R/c_p(1-\d_m)}{i} \right) = \frac{\G(m)\G\left(t^{\b}+1 - R/c_p(1-\d_m)\right)}{\G\left(m-R/c_p(1-\d_m)\right)\G(t^{\b}+1)}.
\]
This in turn implies the existence of a constant $b=b_{m,R,p}>0$ such that \[
\prod_{i=m}^{t^{\b}}\left(1-\frac{R/c_p (1-\d_m)}{i} \right) >  b t^{- \b R (c_p(1-\d_m))^{-1}}.
\]
Then, combining this bound with inequality \eqref{eq:expetabound}, we obtain 
\begin{equation}\label{eq:expR}
\P\left(\exp \left(\sum_{i=m}^{t^{\b}} \eta_i \right) > t/n \right) \le b \frac{n^Rt^{\b R(c_p(1-\d_m))^{-1}}}{t^R}.
\end{equation}
Notice that, by the time that the $jm$-th vertex enters the graph, the $j$-th block of $m$ vertices has total degree at least $m$, that is,  $T_{j,m}^{(m)} \le T_{jm,1}$. This fact, together with \eqref{eq:probexptilde}, Lemma~\ref{lemma:momentoR}, and the above inequality, implies
\begin{equation}
\label{eq:finaldeg}
\begin{split}
\P\left(T_{j,m}^{(m)}\exp\left(\sum_{i=m}^{t^{\b}} \eta_i\right) > t\right) 
&\le
\frac{b }{t^{R-\b R (c_p(1-\d_m))^{-1}}}\sum_{n=1}^{\infty}n^R\P\left(T_{j,m}^{(m)} = n \right) \\
&\le \frac{ b}{t^{R-\b R (c_p(1-\d_m))^{-1}}}\sum_{n=1}^{\infty}n^R\P\left(T_{jm,1} = n \right) \\
&=\frac{b }{t^{R-\b R (c_p(1-\d_m))^{-1}}}\Ed \left[T_{jm,1}^R \right]\\
&\!\!\!\!\!\!\!\stackrel{\tiny Lemma~\ref{lemma:momentoR}}{\leq} \frac{c j^R}{t^{R-\b R (c_p(1-\d_m))^{-1}}},
\end{split}
\end{equation}
for some constant $c=c_{m,p,R}>0$. Combining \eqref{eq:finaldeg} with \eqref{eq:startdeg} gives the desired result.
\end{proof}

\section{Communities in $G_t$} \label{sec:comu}
We are finally able to prove the main result of this paper: the existence of a community in~$G_t$ whose size grows to infinity polynomially in~$t$.
\begin{proof}[Proof of Theorem \ref{teo:community}] Fix some real number~$\e$ such that $0<\e<1$. Let $\a = (1-\e)\frac{(1-p)}{2-p}$, fix $\b = \frac{1+\e 2^{-1}(1-p)}{2}$ and choose $\e'>0$ such that $\e' < \alpha$. These choices of parameter imply that
\begin{equation}
\label{e:betabound}
\b:= \frac{1+\e 2^{-1}(1-p)}{2}<(1-\delta_m)c_p,
\end{equation}
and
\begin{equation}
\label{e:alfabound}
\frac{{\beta}}{(1-\delta_m)c_p}+\left( 1+\frac{1}{R} \right)\a=
(2-p)^{-1}\left(   1+\frac{\e(1-p)}{2(1-\delta_m)}+(1-p)(1-\e)+\frac{(1-p)}{R}(1-\e)   \right) <1,
\end{equation}
for sufficiently large~$m$ and~$R$. By Theorem~\ref{teo:cotainf} and the union bound we have that
\begin{equation}\label{eq:cota}
\P \left( \bigcup_{j=t^{\e'}}^{t^{\a}} \left\lbrace d_{t,m}(j) < t^{\b} \right\rbrace \right) \le \frac{C}{t^{R-\frac{\b R}{c_p(1-\d_m)} -\a(R+1)}}+\frac{m}{(mt)^{98\e'}}.
\end{equation}
Equations~\eqref{e:betabound} and~\eqref{e:alfabound} then imply that the right hand side of the above inequality goes to $0$ as $t$ goes to infinity. The immediate consequence of this fact is that inside each one of those $t^{\a} - t^{\e'}$ blocks of vertices of size $m$ there exists at least one vertex whose degree is larger than $t^{\b}/m$, with high probability. We denote by $L_j=L_{j,m,p}$ the vertex of largest degree among the vertices of the $j$-th block, and we call $L_j$ the \textit{leader} of its block. As we noted, $d_t(L_j) \ge t^{\b}/m$. 

We now prove that these vertices of large degree are connected with high probability. We write $i \nleftrightarrow j$ to denote the fact that there are no edges between the vertices $i$ and~$j$. We define the event
\[
B_t = \bigcup_{j=t^{\e'}}^{t^{\a}} \left\lbrace d_{t,m}(j) < t^{\b} \right\rbrace . 
\]
Let $g_s(i,j)$ be the indicator function of the event where we add an edge between $L_i$ and $L_j$ in an \textit{edge-step} at time $s$. We define the random variable
\[
Y_{2t}^{ij} = \prod_{s=t+1}^{2t} (1-g_s(i,j)).
\]
In other words, $Y_{2t}^{ij}$ is the indicator function of the event where we don't connect $L_i$ and $L_j$ in any of the \textit{edge-steps} between times~$t+1$ and~$2t$. We have that
\begin{equation}\label{eq:g}
\begin{split}
\Ed\left[\mathbb{1}_{B_t^c}(1-g_{2t}(i,j))  \middle| \GG_{2t-1} \right] & = \left(1 - 2(1-p)\frac{d_{2t-1}(L_i)d_{2t-1}(L_j)}{4(2t-1)^2} \right)\mathbb{1}_{B_t^c} \\
& \le \left(1 - (1-p)\frac{t^{2\b}}{8t^2 m^2} \right)\mathbb{1}_{B_t^c}.
\end{split}
\end{equation}

Now, observe that $Y_{2t}^{ij} = Y_{2t-1}^{ij}(1-g_{2t}(i,j))$. So, by using (\ref{eq:g}), we obtain
\begin{equation}
\begin{split}
\Ed\left[\mathbb{1}_{B_t^c} Y_{2t}^{ij} \middle| \GG_{2t-1} \right] & \le \left(1 - (1-p)\frac{t^{2\b}}{8t^2 m^2} \right)Y_{2t-1}^{ij}\mathbb{1}_{B_t^c}.
\end{split}
\end{equation}
Proceeding inductively, taking the conditional expectation with respect to $\GG_{s-1}$ at each step~$s$, we gain the following inequality
\begin{equation}\label{eq:expec}
\begin{split}
\Ed\left[\mathbb{1}_{B_t^c} Y_{2t}^{ij} \middle| \GG_{t} \right] & \le \left(1 - (1-p)\frac{t^{2\b}}{8t^2 m^2} \right)^t\mathbb{1}_{B_t^c} \le \exp \left( -c_1t^{2\b -1} \right)\mathbb{1}_{B_t^c},
\end{split}
\end{equation}
where $c_1$ is a positive constant depending on both $p$ and $m$. Since $\mathbb{1}_{\{L_i \nleftrightarrow L_j \textit{ in }G_{2t}\}} \le Y_{2t}^{ij}$, the above inequality implies
\[
\P \left(L_i \nleftrightarrow L_j \textit{ in }G_{2t}, B_t^c \right) \le \exp \left( -c_1t^{2\b -1} \right)\P\left(B_t^c\right).
\]
Now, by the union bound, we have
\[
\P \left( \bigcup_{t^{\e'}\le i,j\le t^{\a}} \left\lbrace L_i \nleftrightarrow L_j \textit{ in }G_{2t} \right\rbrace, B_t^c \right) \le t^{2\a}\exp \left( -c_1t^{2\b -1} \right)\P\left(B_t^c\right).
\]
And finally,
\begin{equation*}
\begin{split}
\P \left( \bigcup_{t^{\e'}\le i,j\le t^{\a}} \left\lbrace L_i \nleftrightarrow L_j \textit{ in }G_{2t} \right\rbrace \right) & = \P \left( \bigcup_{t^{\e'}\le i,j\le t^{\a}} \left\lbrace L_i \nleftrightarrow L_j \textit{ in }G_{2t} \right\rbrace, B_t^c \right) \\&\quad+ \P \left( \bigcup_{t^{\e'}\le i,j\le t^{\a}} \left\lbrace L_i \nleftrightarrow L_j \textit{ in }G_{2t} \right\rbrace, B_t \right) \\ 
& \le t^{2\a}\exp \left( -c_1t^{2\b -1} \right)\P\left(B_t^c\right) + \P\left(B_t\right).
\end{split}
\end{equation*}
The above inequality, together with our choice of $\a$, $\b$, and inequality~\eqref{eq:cota}, imply the existence of a subgraph of $G_t$ with order $t^\a(1-o(1))$ asymptotically almost surely.
\end{proof}

{\bf Acknowledgements } 
C.A.  was supported by Funda\c{c}\~{a}o de Amparo \`{a} Pesquisa do Estado de S\~{a}o Paulo (FAPESP), grant 2013/24928-2. R.S. has been partially supported by Conselho Nacional de Desenvolvimento Cient\'\i fico e Tecnol\'{o}gico (CNPq)  and by FAPEMIG (Programa Pesquisador Mineiro), grant PPM 00600/16. R.R. has been partially supported by Coordena\c{c}\~{a}o de Aperfei\c{c}oamento de Pessoal de N\'{i}vel Superior (CAPES).

\bibliographystyle{plain}
\bibliography{ref}

\end{document}